\colorlet{darkblue}{blue!55!black}
\colorlet{darkcyan}{cyan!50!black}
\colorlet{darkgreen}{green!60!black}
\def\eqref#1{\textcolor{darkblue}{(\ref{#1})}}
\crefname{hypothesis}{hypothesis}{hypotheses}
\Crefname{hypothesis}{Hypothesis}{Hypotheses}
\let\oldequation\equation
\let\oldendequation\endequation
\let\expandafter\oldequationstar\csname equation*\endcsname
\let\expandafter\oldendequationstar\csname endequation*\endcsname
\renewenvironment{equation*}{\linenomathNonumbers\oldequationstar}{\oldendequationstar\endlinenomath}
\let\oldalign\align
\let\oldendalign\endalign
\let\expandafter\oldalignstar\csname align*\endcsname
\let\expandafter\oldendalignstar\csname endalign*\endcsname
\renewenvironment{align*}{\linenomathNonumbers\oldalignstar}{\oldendalignstar\endlinenomath}
\newcounter{intro}
\newcounter{HypCounter}
\theoremstyle{plain}
\newtheorem{theorem}{Theorem}[section]
\newtheorem{lemma}[theorem]{Lemma}
\newtheorem{corollary}[theorem]{Corollary}
\theoremstyle{definition}
\newtheorem{definition}[theorem]{Definition}
\newtheorem{example}[theorem]{Example}
\newtheorem*{hypothesis*}{Hypothesis}
\newtheorem{notation}[theorem]{Notation}
\newtheorem{remark}[theorem]{Remark}
\newtheorem*{ack}{Acknowledgements}
\numberwithin{equation}{section}
\numberwithin{theorem}{section}
\title[Closedness of singular locus and generation]{Closedness of the singular locus \\ and generation for derived categories}
\author[S.~Dey]{Souvik Dey}
\address{S.~Dey,
Faculty of Mathematics and Physics,
Department of Algebra,
Charles University, 
Sokolovsk\'{a} 83, 186 75 Praha, 
Czech Republic, \url{https://orcid.org/0000-0001-8265-3301}}
\email{souvik.dey@matfyz.cuni.cz, dey0976@gmail.com}  
\author[P.~Lank]{Pat Lank}
\address{P.~Lank,
Department of Mathematics,
University of South Carolina, 
Columbia, SC 29208,
U.S.A.}
\email{plankmathematics@gmail.com}
\date{\today}
\keywords{Triangulated categories, classical generation, bounded derived category, singularity category, singular locus, abelian categories}
\subjclass[2020]{14F08 (primary), 18G80, 13D09, 14B05, 18E10} 
\begin{document}
    
\begin{abstract}
    This work is concerned with a relationship regarding the closedness of the singular locus of a Noetherian scheme and existence of classical generators in its category of coherent sheaves, associated bounded derived category, and singularity category. Particularly, we extend an observation initially made by Iyengar and Takahashi in the affine context to the global setting. Furthermore, we furnish an example a Noetherian scheme whose bounded derived category admits a classical generator, yet not every finite scheme over it exhibits the same property.
\end{abstract}

\maketitle

\section{Introduction}
\label{sec:introduction}

This note strengthens a relationship between the closedness of the singular locus of a Noetherian scheme $X$ and the existence of classical generators in the bounded derived category of coherent sheaves, denoted by $D^b_{\operatorname{coh}}(X)$. By doing so, it establishes a useful homological condition that can be used to probe whether an important topological property concerning singularities of the scheme is satisfied.

In a triangulated category $\mathcal{T}$, an object $G$ is called a \textit{classical generator} if the smallest thick subcategory of $\mathcal{T}$ containing $G$, denoted by $\langle G \rangle$, coincides with $\mathcal{T}$. Put differently, any object in $\mathcal{T}$ can be \textit{built} from $G$ utilizing a finite combination of shifts, cones, and retracts of finite coproducts. This concept was first introduced in \cite{BVdB:2003}.

There has been a very active front towards understanding conditions under which $D^b_{\operatorname{coh}} (X)$ admits a classical generator. These cases include quasi-excellent Noetherian schemes of finite Krull dimension \cite{Aoki:2021}, Noetherian schemes $J\textrm{-}2$ \cite{Elagin/Lunts/Schnurer:2020}, Noetherian schemes admitting a separator \cite{Jatoba:2021}, and several instances in the affine setting \cite{Dey/Lank/Takahashi:2023,Olander:2023,Iyengar/Takahashi:2016,Iyengar/Takahashi:2019}. 

An objective in this line of work is to explicitly identify classical generators in $D^b_{\operatorname{coh}}(X)$. The progress made here includes Noetherian schemes of prime characteristic \cite{BILMP:2023}, noncommutative techniques \cite{Bhaduri/Dey/Lank:2024}, and varieties over a field \cite{Rouquier:2008, Hanlon/Hicks/Lazarev:2023, Hanlon/Hicks:2023, Favero/Huang:2023, Pirozhkov:2023, Brown/Erman:2023, Lank/Olander:2024, Lank:2023}.

Let us motivate the objective behind our note. In $D^b_{\operatorname{coh}}(X)$, a perfect complex is an object which is locally quasi-isomorphic to a bounded complex of finite locally free sheaves. The triangulated subcategory of $D^b_{\operatorname{coh}}(X)$ consisting of perfect complexes admits a classical generator \cite{BVdB:2003}. 

A Noetherian scheme is regular if, and only if, every object in $D^b_{\operatorname{coh}}(X)$ is quasi-isomorphic to a perfect; see \cite{Neeman:2021} for recent developments. In other words, if there exists a perfect complex $P$ such that $\langle P \rangle = D^b_{\operatorname{coh}}(X)$, then $X$ is regular, and vice versa.

On the contrary, if $X$ fails to be regular, yet $D^b_{\operatorname{coh}}(X)$ still admits a classical generator, it follows that the regular locus of $X$ must be open; see \cite[Lemma 2.6]{Iyengar/Takahashi:2019} for the affine case. In this regard, there are varying singularities based on the openness of the regular locus.

The \textit{singular locus} of a Noetherian scheme $X$ comprises points $p$ where $\mathcal{O}_{X,p}$ is not a regular local ring, while its set-theoretic complement is called the \textit{regular locus}. We say $X$ is $J\textrm{-}0$ if its regular locus contains a nonempty open subset and $J\textrm{-}1$ if the regular locus is an open subset. For further details, refer to \Cref{rmk:classical_generator_singularity_category_j1}. This leads us to our main result.

\begin{theorem}\label{thm:singular_locus_and_generation_closed_subschemes}
    For a Noetherian scheme $X$, the following conditions are equivalent:
    \begin{enumerate}
        \item \label{thm:main1} Every closed integral subscheme is $J\textrm{-}0$.
        \item \label{thm:main2} Every closed integral subscheme is $J\textrm{-}1$.
        \item \label{thm:main3} $D^b_{\operatorname{coh}}(Z)$ admits a classical generator for every closed integral subscheme $Z$ of $X$
        \item \label{thm:main4} $D_{\operatorname{sg}}(Z)$ admits a classical generator for every closed integral subscheme $Z$ of $X$
        \item \label{thm:main5} $\operatorname{coh} Z$ admits a classical generator for every closed integral subscheme $Z$ of $X$.
    \end{enumerate}
    Moreover, if any of these conditions are satisfied, both $D^b_{\operatorname{coh}}(Y)$ and $D_{\operatorname{sg}}(Y)$ admit a classical generator for any closed subscheme $Y$ of $X$.
\end{theorem}

\Cref{thm:singular_locus_and_generation_closed_subschemes} connects the closedness of the singular locus and the existence of classical generators in various categories of interest that are constructed from coherent sheaves. For background on these categories, please see \Cref{sec:generation}. Our work is a globalization of \cite[Theorem 1.1]{Iyengar/Takahashi:2019}, which observed a similar result in the setting of an affine scheme. However, we give an independent proof without relying on their techniques, offering a geometrically flavored strategy. 

An important consequence to our result lies in its generality. We exhibit this with an example which satisfies the conditions of \Cref{thm:singular_locus_and_generation_closed_subschemes}, yet does not qualify as quasi-excellent nor $J\textrm{-}2$ scheme, ensuring the results of \cite{Aoki:2021} and \cite{Elagin/Lunts/Schnurer:2020} are not applicable. This example follows a construction by Nagata, cf. \Cref{ex:murayama}. As some other consequences, we also globalize \cite[Corollary 2.7, Proposition 2.8]{Iyengar/Takahashi:2019} and part of \cite[Theorem I, Proposition I]{Nagata:1959}; see \Cref{NagC,J2 crit}.  

\begin{ack}
    We thank Srikanth Iyengar and Josh Pollitz for comments on an earlier draft.
    Moreover, Lank would like to thank Takumi Murayama for
    references and correspondence in regards to \Cref{ex:murayama}. Dey was partially supported by the Charles
    University Research Center program No. UNCE/24/SCI/022 and a grant GA CR
    23-05148S from the Czech Science Foundation. Lank was partially supported by the National Science Foundation under Grant No. DMS-1928930 while the author was in residence at the Simons Laufer Mathematical Sciences Institute (formerly MSRI). Both authors are grateful for the anonymous referee for helpful suggestions to our work.
\end{ack}

\begin{notation}
    Let $X$ be a Noetherian scheme. We will consider the following triangulated categories:
    \begin{enumerate}
        \item $D(X)$ is the unbounded derived category of complexes of $\mathcal{O}_X$-modules
        \item $D_{\operatorname{Qcoh}}(X)$ is the unbounded derived category of complexes of $\mathcal{O}_X$-modules with quasi-coherent cohomology
        \item $D^b_{\operatorname{coh}}(X)$ is the derived category of bounded complexes of $\mathcal{O}_X$-modules with coherent cohomology
        \item $\operatorname{perf}X$ is category of perfect complexes on $X$, i.e. those objects in $D^b_{\operatorname{coh}}(X)$ which locally are quasi-isomorphic to a bounded complex of locally free sheaves of finite rank
        \item $D_{\operatorname{sg}}(X)$ is the Verdier quotient of $D^b_{\operatorname{coh}}(X)$ by $\operatorname{perf} X$, see \Cref{rmk:singularity_categories} for details.
    \end{enumerate}
\end{notation}

\section{Generation}
\label{sec:generation}

This section briefly discusses notions of generation for both triangulated and abelian categories. The primary sources of references are respectively \cite{BVdB:2003, Neeman:2021, Rouquier:2008} and \cite{Dao/Takahashi:2014, Dey/Lank/Takahashi:2023, Iyengar/Takahashi:2016}.

\subsection{Triangulated categories}
\label{sec:generation_triangulated}

Let $\mathcal{T}$ be a triangulated category with shift functor $[1]\colon \mathcal{T} \to \mathcal{T}$ and $\mathcal{S}$ be a subcategory of $\mathcal{T}$.

\begin{definition}\label{def:thick_subcategory_triangulated}
    \hfill
    \begin{enumerate}
        \item A full triangulated subcategory of $\mathcal{T}$ is \textbf{thick} if it is closed under direct summands. The smallest thick subcategory of $\mathcal{T}$ containing $\mathcal{S}$ is denoted $\langle \mathcal{S} \rangle$.
        \item Consider the following additive subcategories of $\mathcal{T}$:
        \begin{enumerate}
            \item $\operatorname{add}(\mathcal{S})$ is the strictly full subcategory of retracts of finite coproducts of shifts of objects in $\mathcal{S}$
            \item $\langle \mathcal{S} \rangle_0$ consists of all objects in $\mathcal{T}$ isomorphic to the zero object
            \item $\langle \mathcal{S} \rangle_1 :=\operatorname{add}(\mathcal{S})$
            \item $\langle \mathcal{S} \rangle_n := \operatorname{add}\{ \operatorname{cone}(\phi) : \phi \in \operatorname{Hom}_\mathcal{T} ( \langle \mathcal{S} \rangle_{n-1}, \langle \mathcal{S} \rangle_1) \}$ if $n\geq 2$.
        \end{enumerate}
    \end{enumerate}
\end{definition}

\begin{remark}
    In the notation of \Cref{def:thick_subcategory_triangulated}, there exists an exhaustive ascending chain of (additive) subcategories for the smallest thick subcategory containing $\mathcal{S}$. That is, $\langle \mathcal{S} \rangle_i$ is contained in $\langle \mathcal{S} \rangle_{i+1}$ for all $i$ and $\langle \mathcal{S} \rangle$ coincides with $\bigcup^{\infty}_{n=0} \langle \mathcal{S} \rangle_n$.
\end{remark}

\begin{definition}
    An object $G$ of $\mathcal{T}$ is called a \textbf{classical generator} if $\langle G \rangle = \mathcal{T}$. Additionally, if there exists $n\geq 0$ such that $\langle G \rangle_n = \mathcal{T}$, we say $G$ is a \textbf{strong generator}.
\end{definition}

\begin{example}
    \hfill
    \begin{enumerate}
        \item If $X$ is a regular Noetherian scheme, then $D^b_{\operatorname{coh}}(X)$ admits a classical generator, cf. \cite[Corollary 3.1.2]{BVdB:2003}. Additionally, if $X$ is separated and of finite Krull dimension, then there exists a strong generator for $D^b_{\operatorname{coh}}(X)$, cf. \cite[Theorem 0.5]{Neeman:2021}. In the special case $X$ is quasi-affine or a quasi-projective variety over a field, one can explicitly identify such objects, cf. respectively \cite[Corollary 5]{Olander:2023} and \cite[Theorem 4]{Orlov:2009}.
        \item Let $X$ be a Noetherian $F$-finite scheme (i.e. whose Frobenius morphism is finite). If $G$ is a classical generator for $\operatorname{perf}X$, then $F_\ast^e G$ is a classical generator for $D^b_{\operatorname{coh}}(X)$ whenever $e \gg 0$. Additionally, if $X$ is separated, then $F_\ast^e G$ is a strong generator. See \cite[Theorem A]{BILMP:2023} for details.
        \item $D^b_{\operatorname{coh}}(X)$ admits a strong generator for any quasi-excellent separated Noetherian scheme $X$ of finite Krull dimension, cf. \cite[Main Theorem]{Aoki:2021}.
        \item Suppose $\pi \colon \widetilde{X} \to X$ is an alteration of varieties over a field where $\widetilde{X}$ is a smooth projective variety. If $\mathcal{L}$ is an ample line bundle on $\widetilde{X}$, then $\mathbb{R}\pi_\ast (\bigoplus^{\dim \widetilde{X}}_{i=1} \mathcal{L}^{\otimes i})$ is a strong generator for $D^b_{\operatorname{coh}}(X)$. See \cite[Example 3.13]{Dey/Lank:2024} for details. If $X$ is a variety over a perfect field, then such alterations exist, cf. \cite[Theorem 4.1]{deJong:1996}.
    \end{enumerate}
\end{example}

\begin{remark}\label{rmk:classical_generator_singularity_category_j1}
    Let $X$ be a Noetherian scheme. For further background on the following, please refer to \cite[\href{https://stacks.math.columbia.edu/tag/07P6}{Tag 07P6}]{StacksProject} and \cite[\href{https://stacks.math.columbia.edu/tag/07R2}{Tag 07R2}]{StacksProject}.
    \begin{enumerate}
        \item The \textit{regular locus} of $X$, denoted $\operatorname{Reg}(X)$, is the collection of points $p$ of $X$ such that $\mathcal{O}_{X,p}$ is a regular local ring. The \textit{singular locus} of $X$ is the collection $\operatorname{Sing}(X):=X\setminus \operatorname{Reg}(X)$. 
        \item We say $X$ is $J\textrm{-}0$ if the regular locus of $X$ contains a nonempty open subset, and is $J\textrm{-}1$ if the regular locus of $X$ is an open subset. If $D_{\textrm{sg}}(X)$ admits a classical generator, then $X$ is $J\textrm{-}1$, cf. \cite[Lemma 2.6]{Iyengar/Takahashi:2019} for the affine case. 
        \item A Noetherian scheme $X$ is said to be $J\textrm{-}2$ if for every morphism $Y \to X$ which is locally of finite type the regular locus $\operatorname{Reg}(Y)$ is open in $Y$. If $X$ is a $J\textrm{-}2$ scheme, then $D^b_{\operatorname{coh}}(X)$ admits a classical generator, cf. \cite[Theorem 4.15]{Elagin/Lunts/Schnurer:2020}. 
        \item Any quasi-excellent Noetherian scheme is $J\textrm{-}2$, e.g. proper schemes over a complete local ring.
    \end{enumerate}
\end{remark}

\begin{remark}
    Let $X$ be a Noetherian scheme and $E$ an object of $D^b_{\operatorname{coh}}(X)$.
    \begin{enumerate}
        \item $\operatorname{Supp}(E):= \cup_{n=-\infty}^\infty \operatorname{Supp}(\mathcal{H}^n (E))$
        \item $E$ is \textit{supported} on a closed subscheme $Z$ of $X$ if $\operatorname{Supp}(E)$ is contained in $Z$
        \item $E$ is \textit{scheme-theoretically supported} on a closed subscheme $Z$ of $X$ if there is an object $E^\prime$ of $D^b_{\operatorname{coh}}(Z)$ such that $i_\ast E^\prime \cong E$ where $i$ is the associated closed immersion.
        \item Any object $E$ supported on a closed subscheme is scheme-theoretically supported on a nilpotent thickening of the closed subscheme, cf. \cite[Lemma 7.40]{Rouquier:2008}.
    \end{enumerate}
    Given a closed subscheme $Z$ of $X$, then we consider the following thick subcategories of $D^b_{\operatorname{coh}}(X)$:
        \begin{enumerate}
            \item $D^b_{\operatorname{coh,Z}}(X)$ is the strictly full subcategory of objects in $D^b_{\operatorname{coh}}(X)$ whose cohomology is supported in $Z$.
            \item $\operatorname{perf}_Z X$ is the strictly full subcategory of perfect complexes on $X$ whose cohomology is supported in $Z$.
        \end{enumerate}
\end{remark}

\begin{remark}\label{rmk:singularity_categories}
    Let $X$ be a Noetherian scheme. The \textbf{singularity category} of $X$, denoted $D_{\operatorname{sg}}(X)$, is the Verdier quotient of $D^b_{\operatorname{coh}}(X)$ by $\operatorname{perf}X$. Note that $X$ is regular if, and only if, $D_{\operatorname{sg}}(X)$ is trivial. Please see \cite{Orlov:2004,Buchweitz/Appendix:2021} for further background in both geometric and algebraic contexts. The thick subcategory of objects $E$ in $D_{\operatorname{sg}}(X)$ which are isomorphic to the image of an object $E$ in $D^b_{\operatorname{coh},Z} (X)$ under the quotient functor $D^b_{\operatorname{coh}}(X) \to D_{\operatorname{sg}}(X)$ is denoted $D_{\operatorname{sg,Z}}(X)$.
\end{remark}

\begin{remark}\label{rmk:scheme_verdier_localization_ELS}
    \hfill
    \begin{enumerate}
        \item Let $X$ be a Noetherian scheme. If $j\colon U \to X$ denotes an open immersion and $Z= X \setminus U$, then there exists a Verdier localization sequence:
        \begin{displaymath}
            D^b_{\operatorname{coh},Z}(X) \to D^b_{\operatorname{coh}}(X)\xrightarrow{\mathbb{L}j^\ast} D^b_{\operatorname{coh}}(U).
        \end{displaymath}
        \item If there exists a Verdier localization sequence:
        \begin{displaymath}
            \mathcal{K} \to \mathcal{T} \to \mathcal{T}/\mathcal{K}
        \end{displaymath}
        where $\mathcal{K}$ and $\mathcal{T}/\mathcal{K}$ admit classical generators, then so does $\mathcal{T}$, cf. \cite[Proposition 4.3]{Elagin/Lunts/Schnurer:2020}.
    \end{enumerate}
\end{remark}

\subsection{Abelian categories}
\label{sec:generation_abelian}

Let $\mathcal{A}$ be an abelian category and $\mathcal{S}$ be a subcategory of $\mathcal{A}$. 

\begin{definition}
    \hfill
    \begin{enumerate}
        \item $\operatorname{add}(\mathcal{S})$ is the smallest strictly full subcategory of $\mathcal{A}$ closed under direct summands of finite coproducts of objects of $\mathcal{S}$
        \item A strictly full additive subcategory $\mathcal{T}$ of $\mathcal{A}$ is said to be \textit{thick} if it is closed under direct summands and if given any object appearing in a short exact sequence
        \begin{displaymath}
            0 \to A \to B \to C \to 0
        \end{displaymath} 
        where the other two objects belong to $\mathcal{T}$, so does the third.
        \item The smallest thick subcategory of $\mathcal{A}$ containing $\mathcal{S}$ is denoted $\operatorname{thick}(\mathcal{S})$.
        \item An object $G\in \mathcal{A}$ is called a \textit{classical generator} if $\operatorname{thick}(G)=\mathcal{A}$.
        \item Consider the following additive subcategories:
        \begin{enumerate}
            \item $\operatorname{th}^0 (\mathcal{S})$ consists of all objects in $\mathcal{A}$ isomorphic to the zero object
            \item $\operatorname{th}^1 (\mathcal{S}):= \operatorname{add}(\mathcal{S})$
            \item $\operatorname{th}^n (\mathcal{S})$ is defined as the strictly full subcategory consisting of direct summands of objects which appear in a short exact sequence
            \begin{displaymath}
                0 \to A \to B \to C \to 0
            \end{displaymath}
            where the other two objects belong to $\operatorname{th}^{n-1}(\mathcal{S})$.
        \end{enumerate}
    \end{enumerate}
\end{definition}

The following is an easy generalization of \cite[Lemma 3.15]{Dey/Lank/Takahashi:2023}.

\begin{lemma}\label{lem:thick_abelian_filtration}
    For any subcategory $\mathcal{S}$ of $\mathcal{A}$, we have $\operatorname{thick}(\mathcal{S})= \bigcup^\infty_{i=0} \operatorname{th}^i (\mathcal{S})$.
\end{lemma}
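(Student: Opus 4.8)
The plan is to follow the proof of \cite[Lemma 4.2.2]{Dey/Lank/Takahashi:2023} almost verbatim, noting that it uses nothing about modules beyond the ambient abelian structure. Write $\mathcal{U} := \bigcup_{i=0}^\infty \operatorname{th}^i(\mathcal{S})$. First I would record that the $\operatorname{th}^i(\mathcal{S})$ form an ascending chain: one has $\operatorname{th}^0(\mathcal{S}) \subseteq \operatorname{add}(\mathcal{S}) = \operatorname{th}^1(\mathcal{S})$ since a zero object is a direct summand of any object, and for $n \geq 2$ every $M \in \operatorname{th}^{n-1}(\mathcal{S})$ fits into the split short exact sequence $0 \to 0 \to M \to M \to 0$ whose remaining two terms lie in $\operatorname{th}^{n-1}(\mathcal{S})$, so $M \in \operatorname{th}^n(\mathcal{S})$. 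Hence $\mathcal{U}$ is an increasing union and $\mathcal{S} \subseteq \operatorname{th}^1(\mathcal{S}) \subseteq \mathcal{U}$.

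Next I would prove $\mathcal{U} \subseteq \operatorname{thick}(\mathcal{S})$ by induction on $i$. The cases $i \leq 1$ hold because $\operatorname{thick}(\mathcal{S})$ is a strictly full additive subcategory, closed under direct summands, containing $\mathcal{S}$, hence containing every zero object together with $\operatorname{add}(\mathcal{S})$. If $\operatorname{th}^{n-1}(\mathcal{S}) \subseteq \operatorname{thick}(\mathcal{S})$, then each object of $\operatorname{th}^n(\mathcal{S})$ is a direct summand of a term of a short exact sequence whose other two terms lie in $\operatorname{th}^{n-1}(\mathcal{S}) \subseteq \operatorname{thick}(\mathcal{S})$; the two-out-of-three property for short exact sequences in $\operatorname{thick}(\mathcal{S})$ together with its closure under summands then yields $\operatorname{th}^n(\mathcal{S}) \subseteq \operatorname{thick}(\mathcal{S})$.

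For the reverse inclusion it suffices to check that $\mathcal{U}$ is itself thick, since it already contains $\mathcal{S}$ and $\operatorname{thick}(\mathcal{S})$ is the smallest such subcategory. It is closed under direct summands levelwise by the construction of $\operatorname{th}^i(\mathcal{S})$, and closed under finite direct sums levelwise: this is built into $\operatorname{add}(\mathcal{S}) = \operatorname{th}^1(\mathcal{S})$, and for $n \geq 2$ one takes the direct sum of two witnessing short exact sequences, whose outer terms lie in $\operatorname{th}^{n-1}(\mathcal{S})$ by induction. Finally, given a short exact sequence with two terms in $\mathcal{U}$, both lie in a common $\operatorname{th}^k(\mathcal{S})$ since the chain is increasing, and the third term then lies in $\operatorname{th}^{k+1}(\mathcal{S}) \subseteq \mathcal{U}$ directly from the definition. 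Combining the two inclusions gives $\operatorname{thick}(\mathcal{S}) = \mathcal{U} = \bigcup_{i=0}^\infty \operatorname{th}^i(\mathcal{S})$.

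There is no genuine obstacle, as the content is bookkeeping; the step that most repays care is confirming that $\mathcal{U}$ is stable under short exact sequences, which requires reading the definition of $\operatorname{th}^n(\mathcal{S})$ symmetrically --- the surviving object allowed to be the sub, the middle term, or the quotient --- so that the conclusion is independent of which two of the three terms were assumed to lie in $\mathcal{U}$.
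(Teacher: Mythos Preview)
Your proposal is correct and follows essentially the same approach as the paper: both first show by induction that each $\operatorname{th}^n(\mathcal{S})$ lies in $\operatorname{thick}(\mathcal{S})$ via the two-out-of-three property, and then verify that the union is itself thick by passing to a common level of the filtration. Your version is, if anything, slightly more careful in explicitly recording the ascending chain and closure under finite direct sums, which the paper uses implicitly.
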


\begin{proof}
    First, we show by induction that $\operatorname{th}^n (\mathcal{S})$ is contained in $\operatorname{thick}(\mathcal{S})$ for each $n$. Let $E$ belong to $\operatorname{th}^1 (\mathcal{S})$. Then $E$ is a direct summand of a finite direct sum of objects in $\mathcal{S}$, but any such object must belong to $\operatorname{thick}(\mathcal{S})$. Assume there exists $n$ such that $\operatorname{th}^k (\mathcal{S})$ is contained in $\operatorname{thick}(\mathcal{S})$ for each $1 \leq k \leq n$. Let $E$ be in $\operatorname{th}^{n+1}(\mathcal{S})$. There exists a short exact sequence in $\mathcal{A}$:
    \begin{displaymath}
        0 \to A\to B \to C \to 0
    \end{displaymath}
    where $E$ is a direct summand of one of the objects and the other two are objects of $\operatorname{th}^n (\mathcal{S})$. We have three cases to check, and they can all be shown similarly to one another, so assume that $A,B$ both belong to $\operatorname{th}^n (\mathcal{S})$. However, the definition of a thick subcategory tells us $C$ belongs to $\operatorname{thick}(\mathcal{S})$ via the two-out-of-three property for short exact sequences. Hence, we have shown that $\cup^\infty_{n=1} \operatorname{th}^n (\mathcal{S})$ is contained in $\operatorname{thick}(\mathcal{S})$.
    
    Lastly, we show $\bigcup^\infty_{i=0} \operatorname{th}^i (\mathcal{S})$ is a thick subcategory of $\mathcal{A}$. For each $N\geq n$, we know that $\operatorname{th}^n (\mathcal{S})$ is contained in $\operatorname{th}^N (\mathcal{S})$. Suppose there is a short exact sequence where two of the three objects belong to $\bigcup^\infty_{i=0} \operatorname{th}^i (\mathcal{S})$:
    \begin{displaymath}
        0 \to A \to B \to C \to 0.
    \end{displaymath}
    There are three cases to check, and as above, it suffices to check one of them. Assume that $A$ is an object in $\operatorname{th}^s (\mathcal{S})$ and $B$ is an object in $\operatorname{th}^t (\mathcal{A})$. Set $v = \max\{ s,t\}$. Then $A,B$ are objects of $\operatorname{th}^v (\mathcal{A})$, and so $C$ belongs to $\operatorname{th}^{v+1} (\mathcal{S})$. Moreover, the definition of each $\bigcup^\infty_{i=0} \operatorname{th}^i (\mathcal{S})$ ensures it is closed under direct summands, and so this shows the desired claim.
\end{proof}

\begin{lemma}\label{lem:abelian_thick_to_bounded_derived_thick}
    If $\operatorname{thick}(\mathcal{S})=\mathcal{A}$, then $\langle \mathcal{S} \rangle = D^b(\mathcal{A})$.
\end{lemma}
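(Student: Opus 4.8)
The plan is to prove the two inclusions, the nontrivial one reducing to a pair of dévissage arguments. Since $\langle \mathcal{S} \rangle$ is by construction a thick triangulated subcategory of $D^b(\mathcal{A})$, the inclusion $\langle \mathcal{S} \rangle \subseteq D^b(\mathcal{A})$ is automatic, so all the work lies in showing $D^b(\mathcal{A}) \subseteq \langle \mathcal{S} \rangle$.

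First I would show that every object of $\mathcal{A}$, regarded as a complex concentrated in degree zero, lies in $\langle \mathcal{S} \rangle$. By Lemma~\ref{lem:thick_abelian_filtration} it suffices to prove by induction on $n$ that $\operatorname{th}^n(\mathcal{S}) \subseteq \langle \mathcal{S} \rangle$. For $n=1$ this is immediate: $\operatorname{th}^1(\mathcal{S}) = \operatorname{add}(\mathcal{S})$ consists of direct summands of finite coproducts of objects of $\mathcal{S}$, and a thick triangulated subcategory is closed under finite coproducts and under retracts. For the inductive step, an object of $\operatorname{th}^{n+1}(\mathcal{S})$ is a direct summand of one of the terms of a short exact sequence $0 \to A \to B \to C \to 0$ in $\mathcal{A}$ in which two of $A,B,C$ lie in $\operatorname{th}^n(\mathcal{S})$; such a sequence yields a distinguished triangle $A \to B \to C \to A[1]$ in $D^b(\mathcal{A})$, so the two-out-of-three property for triangulated subcategories together with the inductive hypothesis places all three of $A,B,C$ in $\langle \mathcal{S} \rangle$, and closure under retracts completes the step.

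Next I would show $\langle \mathcal{A} \rangle = D^b(\mathcal{A})$, where here $\mathcal{A}$ denotes the class of degree-zero stalk complexes; combined with the previous paragraph, which gives $\mathcal{A} \subseteq \langle \mathcal{S} \rangle$ and hence $\langle \mathcal{A} \rangle \subseteq \langle \mathcal{S} \rangle$, this finishes the proof. Given $E \in D^b(\mathcal{A})$ with nonzero cohomology concentrated in degrees $[a,b]$, I would induct on the amplitude $b-a$. The canonical truncation triangle $\tau_{\le b-1} E \to E \to \mathcal{H}^b(E)[-b] \to (\tau_{\le b-1}E)[1]$ exhibits $E$ as the cone of a morphism between $\tau_{\le b-1}E$, which has strictly smaller amplitude, and a shift of the stalk complex $\mathcal{H}^b(E)$; by induction both lie in $\langle \mathcal{A} \rangle$, hence so does $E$.

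The only points requiring care are the passage from short exact sequences in $\mathcal{A}$ to distinguished triangles in $D^b(\mathcal{A})$ and the existence of the canonical truncation functors on $D^b(\mathcal{A})$ together with their effect on cohomology; both are standard features of the bounded derived category of an arbitrary abelian category, so I do not anticipate a genuine obstacle.
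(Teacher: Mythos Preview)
Your argument is correct. Both you and the paper first establish $\mathcal{A}\subseteq\langle\mathcal{S}\rangle$ by the same induction along the filtration $\operatorname{th}^n(\mathcal{S})$, using that a short exact sequence in $\mathcal{A}$ becomes a distinguished triangle in $D^b(\mathcal{A})$. The only divergence is in how an arbitrary $E\in D^b(\mathcal{A})$ is reduced to stalk complexes: you run an induction on cohomological amplitude via the canonical truncation triangle $\tau_{\le b-1}E\to E\to \mathcal{H}^b(E)[-b]\to$, whereas the paper uses a single triangle $Z(E)\to E\to B(E)[-1]\to$ built from the complexes of cycles and boundaries, each of which has zero differential and is therefore a finite direct sum of shifted objects of $\mathcal{A}$. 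The paper's device disposes of the amplitude induction in one stroke but requires picking an actual complex representing $E$; your truncation argument is the more commonly cited one and works purely at the level of the derived category. Either route is perfectly adequate here.
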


\begin{proof}
    We know from \Cref{lem:thick_abelian_filtration} that $\operatorname{thick}(\mathcal{S})= \bigcup^\infty_{i=0} \operatorname{th}^i (\mathcal{S})$. Choose an object $E$ in $D^b (\mathcal{A})$. There exists a distinguished triangle in $D^b (\mathcal{A})$:
    \begin{displaymath}
        Z(E) \to E \to B(E)[1] \to Z(E)[1]
    \end{displaymath}
    where $Z(E)$ is the complex of cycles and $B(E)$ is the complex of boundaries. Note that the differentials of both $Z(E),B(E)[1]$ is zero, so these are complexes of shifts of objects in $\mathcal{A}$. If one can show that $Z(E),B(E)$ belongs to $\langle \mathcal{S} \rangle$, then the desired claim holds as one would have $E$ is in $\langle \mathcal{S} \rangle$. It suffices to check that each object of $A$ belongs to $\langle \mathcal{S} \rangle$. However, an induction argument tells us $\operatorname{th}^n (\mathcal{S})$ is contained in $\langle \mathcal{S} \rangle_{2^{n-1}}$ for all $n$. The case $n=1$ is clear, so assume there exists $n$ such that $\operatorname{th}^k (\mathcal{S})$ is contained in $\langle \mathcal{S} \rangle_{2^{k-1}}$ for all $1\leq k \leq n$. Let $E$ belong to $\operatorname{th}^{n+1} (\mathcal{S})$. There exists a short exact sequence in $\mathcal{A}$:
    \begin{displaymath}
        0 \to A \to B \to C \to 0
    \end{displaymath}
    where $E$ is a direct summand of one of the objects and the other two are objects of $\operatorname{th}^n (\mathcal{A})$. This gives us a distinguished triangle in $D^b (\mathcal{A})$:
    \begin{displaymath}
        A \to B \to C \to A[1].
    \end{displaymath}
    There are three cases, lets prove it for the case where $A,B$ are in $\operatorname{th}^n (\mathcal{S})$ as the others follow similarly. The inductive hypothesis tells us $A,B$ belong to $\langle \mathcal{S} \rangle_{2^{n-1}}$. Hence, it follows that $B$ belongs to $\langle \mathcal{S} \rangle_{2^n}$. This completes the proof.
\end{proof}

\section{Results}
\label{sec:results}

This section establishes \Cref{thm:singular_locus_and_generation_closed_subschemes}. The following lemma might be known to experts, but we spell it out for the sake of convenience. For details on Serre subcategories, please refer to \cite[\href{https://stacks.math.columbia.edu/tag/02MN}{Tag 02MN}]{StacksProject}.

\begin{lemma}\label{lem:quotient_functor_reflects_isomorphism_up_to_generation}
    Let $\mathcal{A}$ be an abelian category, $\mathcal{B}$ be a Serre subcategory, and $\pi\colon \mathcal{A} \to \mathcal{A}/\mathcal{B}$ be the associated quotient functor. Let $\mathcal{D}$ be a thick subcategory of $\mathcal{A}$ containing $\mathcal{B}$. If there exists objects $D$ in $\mathcal{D}$ and $A$ in $\mathcal{A}$ such that $\pi(D) \cong \pi(A)$ in $\mathcal{A}/\mathcal{B}$, then $A$ is an object of $\mathcal{D}$.
\end{lemma}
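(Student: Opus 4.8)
The plan is to use the explicit description of the Serre quotient category $\mathcal{A}/\mathcal{B}$ to realize the given isomorphism $\pi(D)\cong\pi(A)$ by a concrete zigzag of morphisms in $\mathcal{A}$, and then to propagate membership in $\mathcal{D}$ along that zigzag using the two-out-of-three property for short exact sequences, which is exactly the defining feature of a thick subcategory of an abelian category together with closure under direct summands.

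First I would recall the construction of $\mathcal{A}/\mathcal{B}$: a morphism $\phi\colon \pi(D)\to\pi(A)$ is represented by an honest morphism $g\colon D'\to A/A'$ in $\mathcal{A}$ for some subobject $D'\subseteq D$ with $D/D'\in\mathcal{B}$ and some subobject $A'\subseteq A$ with $A'\in\mathcal{B}$; writing $\iota\colon D'\hookrightarrow D$ and $q\colon A\twoheadrightarrow A/A'$, one has $\phi=\pi(q)^{-1}\circ\pi(g)\circ\pi(\iota)^{-1}$, where $\pi(\iota)$ and $\pi(q)$ are isomorphisms because $\operatorname{coker}(\iota)=D/D'$ and $\ker(q)=A'$ lie in $\mathcal{B}$. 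Hence, when $\phi$ is an isomorphism, so is $\pi(g)$, and therefore both $\ker(g)$ and $\operatorname{coker}(g)$ belong to $\mathcal{B}$. For these facts about the localization I would cite the Stacks project, \cite[\href{https://stacks.math.columbia.edu/tag/02MN}{Tag 02MN}]{StacksProject} and the surrounding material; this bookkeeping about $\mathcal{A}/\mathcal{B}$ is the only genuinely delicate point, and it is where I expect to have to be careful rather than where any real difficulty lies.

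The remainder is a short chase using $\mathcal{B}\subseteq\mathcal{D}$ and thickness of $\mathcal{D}$, in four steps, each invoking a short exact sequence with two terms already known to lie in $\mathcal{D}$: from $0\to D'\to D\to D/D'\to 0$ with $D,D/D'\in\mathcal{D}$ deduce $D'\in\mathcal{D}$; from $0\to\ker(g)\to D'\to\operatorname{im}(g)\to 0$ deduce $\operatorname{im}(g)\in\mathcal{D}$; from $0\to\operatorname{im}(g)\to A/A'\to\operatorname{coker}(g)\to 0$ deduce $A/A'\in\mathcal{D}$; and finally from $0\to A'\to A\to A/A'\to 0$ deduce $A\in\mathcal{D}$. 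This yields the claim.
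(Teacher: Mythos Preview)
Your proposal is correct and follows essentially the same route as the paper: represent the isomorphism by a morphism $g\colon D'\to A/A'$ with $D/D',A'\in\mathcal{B}$, observe that $\pi(g)$ is an isomorphism so that $\ker(g),\operatorname{coker}(g)\in\mathcal{B}$, and then climb through the same four short exact sequences using thickness of $\mathcal{D}$ and $\mathcal{B}\subseteq\mathcal{D}$. The only cosmetic difference is that the paper phrases the middle two steps via $\operatorname{coim}(g)$ and an auxiliary commutative diagram in $\mathcal{A}/\mathcal{B}$, whereas you go directly through $\operatorname{im}(g)$; in an abelian category this is the same object, and your formulation is if anything slightly cleaner.
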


\begin{proof}
    Let $f \colon \pi(D) \to \pi(A)$ be an isomorphism in $\mathcal{A}/\mathcal{B}$. There exists a short exact sequence in $\mathcal{A}/\mathcal{B}$:
    \begin{displaymath}
        0 \to \pi(D) \xrightarrow{f} \pi(A) \to 0 \to 0.
    \end{displaymath}
    The map $f$ is the image of a map $f^\prime \colon D^\prime \to A/A^\prime$ where $D^\prime, A^\prime$ are respectively subobjects of $D,A$ and $D/D^\prime, A^\prime$ belong to $\mathcal{B}$. We have a short exact sequence in $\mathcal{A}$:
    \begin{displaymath}
        0 \to D^\prime \to D \to D/D^\prime \to 0.
    \end{displaymath}
    If both $D$ and $D/D^\prime$ belong to $\mathcal{D}$ (i.e. $\mathcal{B}\subseteq \mathcal{D}$), then $D^\prime$ is in $\mathcal{D}$. Set $D_1 := D^\prime / \ker f^\prime$. Since $\pi(f^\prime)$ is a monomorphism, we know that $\ker f^\prime$ is in $\mathcal{B}$, and hence, $\ker f^\prime$ is in $\mathcal{D}$. There exists a short exact sequence in $\mathcal{A}$:
    \begin{displaymath}
        0 \to \ker f^\prime \to D^\prime \to D_1 \to 0.
    \end{displaymath}
    As both $\ker f^\prime$ and $D^\prime$ belong to $\mathcal{D}$, we have $D_1$ belongs to $\mathcal{D}$. Note that $D_1 \cong \operatorname{coim} f^\prime$ in $\mathcal{A}$. Let $f_1 \colon \operatorname{coim} f^\prime \to A/A^\prime$ be the natural map induced by $f^\prime$. There exists a short exact sequence in $\mathcal{A}$:
    \begin{displaymath}
        0 \to \operatorname{coim}f^\prime \to A/A^\prime \to \operatorname{coker} f_1 \to 0.
    \end{displaymath}
    Tying our work so far together, there exists a commutative diagram in $\mathcal{A}/\mathcal{B}$ (cf. \cite[$\S 3.1$ Corollaire 1]{Gabriel:1962}):
    \begin{displaymath}
        \begin{tikzcd}
            0 & {\pi(D)} & {\pi(A)} & 0 & 0 \\
            0 & {\pi(\operatorname{coim} f^\prime)} & {\pi(A/A^\prime)} & {\pi(\operatorname{coker} f_1)} & 0
            \arrow[from=1-1, to=1-2]
            \arrow[from=1-4, to=1-5]
            \arrow[from=2-1, to=2-2]
            \arrow["{\pi(f_1)}", from=2-2, to=2-3]
            \arrow[from=2-3, to=2-4]
            \arrow[from=2-4, to=2-5]
            \arrow["w"', from=1-4, to=2-4]
            \arrow["v"', from=1-3, to=2-3]
            \arrow[from=1-3, to=1-4]
            \arrow["f", from=1-2, to=1-3]
            \arrow["u"', from=1-2, to=2-2]
        \end{tikzcd}
    \end{displaymath}
    where $u,v,w$ are isomorphisms in $\mathcal{A}/\mathcal{B}$. This tells us that $\pi (f_1)$ is an isomorphism, and so both $\ker f_1$ and $\operatorname{coker} f_1$ belong to $\mathcal{D}$ as these belong to $\mathcal{B}$ (cf. \cite[$\S 3.1$ Lemme 4]{Gabriel:1962}). If both $\operatorname{coim} f^\prime$ and $\operatorname{coker} f_1$ are in $\mathcal{D}$, then $A/A^\prime$ belongs to $\mathcal{D}$. Once more, there is the short exact sequence in $\mathcal{A}$:
    \begin{displaymath}
        0 \to A^\prime \to A \to A/A^\prime \to 0.
    \end{displaymath}
    If $A^\prime$ and $A/A^\prime$ are in $\mathcal{D}$, then so is $A$, which shows the desired claim.
\end{proof}

\begin{lemma}\label{lem:thick_subcategory_correspondence_serre_quotients}
    Let $\mathcal{A}$ be an abelian category, $\mathcal{B}$ be a Serre subcategory, and $\pi\colon \mathcal{A} \to \mathcal{A}/\mathcal{B}$ be the associated quotient functor. There exists a bijective correspondence between thick subcategories $\mathcal{C}$ of $\mathcal{A}/\mathcal{B}$ and thick subcategories of $\mathcal{A}$ containing $\mathcal{B}$.
\end{lemma}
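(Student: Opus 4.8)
The plan is to exhibit the correspondence explicitly by two mutually inverse, inclusion-preserving maps, with Lemma~\ref{lem:quotient_functor_reflects_isomorphism_up_to_generation} doing the heavy lifting. Given a thick subcategory $\mathcal{D}$ of $\mathcal{A}$ containing $\mathcal{B}$, I would let $\pi(\mathcal{D})$ be the strictly full subcategory of $\mathcal{A}/\mathcal{B}$ consisting of objects isomorphic to $\pi(D)$ for some $D\in\mathcal{D}$. Conversely, given a thick subcategory $\mathcal{C}$ of $\mathcal{A}/\mathcal{B}$, I would let $\pi^{-1}(\mathcal{C})$ be the strictly full subcategory of $\mathcal{A}$ consisting of those $A$ with $\pi(A)\in\mathcal{C}$. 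The claim is that $\mathcal{D}\mapsto\pi(\mathcal{D})$ and $\mathcal{C}\mapsto\pi^{-1}(\mathcal{C})$ are well defined and mutually inverse.

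The easy half is that $\pi^{-1}(\mathcal{C})$ is a thick subcategory of $\mathcal{A}$ containing $\mathcal{B}$. It contains $\mathcal{B}$ because $\pi$ annihilates $\mathcal{B}$ and $0\in\mathcal{C}$; it is strictly full and additive since the defining condition is closed under isomorphism and $\pi(A_1\oplus A_2)\cong\pi(A_1)\oplus\pi(A_2)$; it is closed under direct summands because $\mathcal{C}$ is and $\pi$ is additive; and it satisfies the two-out-of-three property because $\pi$ is exact, so any short exact sequence in $\mathcal{A}$ maps to one in $\mathcal{A}/\mathcal{B}$, where thickness of $\mathcal{C}$ applies to the $\pi$-images.

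The more delicate half is that $\pi(\mathcal{D})$ is thick in $\mathcal{A}/\mathcal{B}$. Additivity is immediate. For closure under summands: if $X$ is a summand of $\pi(D)$ with $D\in\mathcal{D}$, write $\pi(D)\cong X\oplus Y$, use essential surjectivity of $\pi$ to choose $A,A'\in\mathcal{A}$ with $\pi(A)\cong X$ and $\pi(A')\cong Y$, observe $\pi(A\oplus A')\cong\pi(D)$, and apply Lemma~\ref{lem:quotient_functor_reflects_isomorphism_up_to_generation} (this is where $\mathcal{B}\subseteq\mathcal{D}$ enters) to get $A\oplus A'\in\mathcal{D}$, hence $A\in\mathcal{D}$ by thickness, so $X\cong\pi(A)\in\pi(\mathcal{D})$. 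For two-out-of-three I would first record that every short exact sequence in $\mathcal{A}/\mathcal{B}$ is isomorphic, as a short exact sequence, to the $\pi$-image of one in $\mathcal{A}$: represent the monomorphism by a genuine monomorphism in $\mathcal{A}$ after discarding its kernel (which lies in $\mathcal{B}$), exactly as in the opening steps of the proof of Lemma~\ref{lem:quotient_functor_reflects_isomorphism_up_to_generation}, and take its cokernel. Then, given a short exact sequence in $\mathcal{A}/\mathcal{B}$ two of whose terms lie in $\pi(\mathcal{D})$, lift it to $0\to P\to Q\to R\to 0$ in $\mathcal{A}$; the two corresponding terms among $P,Q,R$ have $\pi$-image isomorphic to an object of $\pi(\mathcal{D})$, hence lie in $\mathcal{D}$ by Lemma~\ref{lem:quotient_functor_reflects_isomorphism_up_to_generation}, so the third lies in $\mathcal{D}$ by thickness of $\mathcal{D}$, and therefore the third term of the original sequence lies in $\pi(\mathcal{D})$.

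It remains to check the two composites are identities. The inclusion $\mathcal{D}\subseteq\pi^{-1}(\pi(\mathcal{D}))$ is trivial, and the reverse inclusion is precisely Lemma~\ref{lem:quotient_functor_reflects_isomorphism_up_to_generation}. The inclusion $\pi(\pi^{-1}(\mathcal{C}))\subseteq\mathcal{C}$ is immediate from the definitions together with $\mathcal{C}$ being strictly full, and the reverse follows since any $C\in\mathcal{C}$ is isomorphic to $\pi(A)$ for some $A\in\mathcal{A}$ by essential surjectivity, whence $A\in\pi^{-1}(\mathcal{C})$ and $C\in\pi(\pi^{-1}(\mathcal{C}))$. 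Both maps visibly preserve inclusions, so this gives the asserted bijective correspondence. I expect the main obstacle to be the two-out-of-three verification for $\pi(\mathcal{D})$: it is the one place that genuinely needs both the lifting of short exact sequences through the Serre quotient and a careful invocation of Lemma~\ref{lem:quotient_functor_reflects_isomorphism_up_to_generation}; everything else is formal.
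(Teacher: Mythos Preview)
Your proposal is correct and establishes the same bijection via the same two maps $\mathcal{D}\mapsto\pi(\mathcal{D})$ and $\mathcal{C}\mapsto\pi^{-1}(\mathcal{C})$, with Lemma~\ref{lem:quotient_functor_reflects_isomorphism_up_to_generation} as the key input. However, the route differs from the paper's at the harder half. You prove directly that the essential image $\pi(\mathcal{D})$ is already thick in $\mathcal{A}/\mathcal{B}$, by lifting a short exact sequence in the quotient to one in $\mathcal{A}$ and invoking Lemma~\ref{lem:quotient_functor_reflects_isomorphism_up_to_generation} on its terms; the equality $\mathcal{D}=\pi^{-1}(\pi(\mathcal{D}))$ is then a single application of that lemma. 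The paper instead does not verify thickness of $\pi(\mathcal{D})$: it passes to the thick closure $\mathcal{D}'=\operatorname{thick}(\pi(\mathcal{D}))$ in $\mathcal{A}/\mathcal{B}$ and proves $\pi^{-1}(\mathcal{D}')=\mathcal{D}$ by inducting on the filtration $\operatorname{th}^n(\pi(\mathcal{D}))$ of Lemma~\ref{lem:thick_abelian_filtration}, lifting short exact sequences and applying Lemma~\ref{lem:quotient_functor_reflects_isomorphism_up_to_generation} at each inductive step. Your argument is more economical (no filtration, no induction) and yields the extra information that $\pi(\mathcal{D})$ is itself thick; the paper's approach trades that for not having to lift a short exact sequence in the direct-summand check, though it still needs the lift in the inductive step, so the underlying Gabriel-type input is the same in both.
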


\begin{proof}
    Let $\mathcal{C}$ be a strictly full subcategory of $\mathcal{A}/\mathcal{B}$. Let $\pi \colon \mathcal{A} \to \mathcal{A}/\mathcal{B}$ be the quotient functor. Set $\pi^{-1}(\mathcal{C})$ to be the collection of objects $C$ in $\mathcal{A}$ such that $\pi(C)$ belongs to $\mathcal{C}$.
         
    Suppose $\mathcal{C}$ is thick. We show that $\pi^{-1}(\mathcal{C})$ is a thick subcategory of $\mathcal{A}$ containing $\mathcal{B}$. It is clear that $\mathcal{B}$ is contained in $\pi^{-1}(\mathcal{C})$ as $\mathcal{B}=\ker \pi$. Suppose we have a short exact sequence in $\mathcal{A}$ where two of the three objects belong to $\pi^{-1}(\mathcal{C})$. Applying $\pi$ gives short exact in $\mathcal{A}/\mathcal{B}$, and so the third object must belong to $\mathcal{C}$. Hence, the third object belongs to $\pi^{-1}(\mathcal{C})$. A similar argument tells us that $\pi^{-1}(\mathcal{C})$ is closed under direct summands. It is evident that $\pi(\pi^{-1}(\mathcal{C}))=\mathcal{C}$. Hence, we have shown that each thick subcategory of $\mathcal{A}/\mathcal{B}$ is the essential image of a thick subcategory of $\mathcal{A}$ containing $\mathcal{B}$. 
        
    It suffices to check that every thick subcategory $\mathcal{D}$ of $\mathcal{A}$ containing $\mathcal{B}$ is of the form $\pi^{-1}(\mathcal{D}^\prime)$ where $\mathcal{D}^\prime$ is a thick subcategory of $\mathcal{A}/\mathcal{B}$. Let $\mathcal{D}^\prime= \operatorname{thick}_{\mathcal{A}/\mathcal{B}}(\pi(\mathcal{D}))$. We show that $\mathcal{D} = \pi^{-1} (\mathcal{D}^\prime)$. If $\pi(\mathcal{D})$ is contained in $\mathcal{D}^\prime$, then $\mathcal{D}$ is contained in $\pi^{-1}(\mathcal{D}^\prime)$. It suffices to check the reverse inclusion. Let $E$ belong to $\pi^{-1}(\mathcal{D}^\prime)$. Then $\pi(E)$ is in $ \mathcal{D}^\prime$. By \Cref{lem:thick_abelian_filtration}, there exists an $n\geq 0$ such that $\pi(E)$ belongs to $\operatorname{th}^n (\pi(\mathcal{D}))$. We induct on $n$ to show that if $\pi (E)$ is in $\operatorname{th}^n (\pi(\mathcal{D}))$, then $E$ belongs to $\mathcal{D}$. If $n=0$, then $E\in\mathcal{B}\subseteq \mathcal{D}$. Suppose $\pi(E)$ is in $\operatorname{th}^1 (\pi(\mathcal{D}))$. There exists $E^\prime$ in $\mathcal{A}/\mathcal{B}$ such that $\pi(E)\oplus E^\prime$ is a finite coproduct of objects in $\pi(\mathcal{D})$. That is, $\pi(E) \oplus E^\prime \cong \oplus^N_{i=1}\pi(D_i)$ for $D_i$ in $\mathcal{D}$. As the quotient functor $\pi \colon \mathcal{A} \to \mathcal{A}/\mathcal{B}$ is exact (cf. \cite[$\S 3.1$ Proposition 1]{Gabriel:1962}), it follows that $\pi(E) \oplus E^\prime \cong \pi(\oplus^N_{i=1} D_i)$ in $\mathcal{A}/\mathcal{B}$. By \Cref{lem:quotient_functor_reflects_isomorphism_up_to_generation}, we see that $\oplus^N_{i=1} D_i$ is in $\mathcal{D}$. Choose $E^{\prime \prime}$ in $\mathcal{A}$ such that $\pi(E^{\prime \prime})\cong E^\prime$. Once more, it follows that $\pi(E\oplus E^{\prime \prime})\cong \pi (\oplus^N_{i=1} D_i)$. Hence, $E\oplus E^{\prime \prime}$ is in $\mathcal{D}$. As $\mathcal{D}$ is thick, it follows that $E$ is in $\mathcal{D}$. This shows the case $n=1$. 
    
    Assume there exists $n\geq 1$ such that for all $0\leq k \leq n$ and all objects $A$ of $\mathcal{A}$, if $\pi(A)$ is in $\operatorname{th}^k (\pi(\mathcal{D}))$, then $A$ belongs to $\mathcal{D}$. Suppose $E$ is an object of $\mathcal{A}$ such that $\pi(E)$ is in $\operatorname{th}^{n+1}(\pi(\mathcal{D}))$. There exists a short exact sequence 
    \begin{displaymath}
        0 \to M \to N \to L \to 0
    \end{displaymath}
    where two of the three objects belong to $\operatorname{th}^n (\pi(\mathcal{D}))$ and $\pi(E)$ is a direct summand of the third object. Without loss of generality, we can assume $\pi(E)$ is a direct summand of $L$. There exists a commutative diagram in $\mathcal{A}/\mathcal{B}$ (cf. \cite[$\S 3.1$ Corollaire 1]{Gabriel:1962}):
    \begin{displaymath}
        \begin{tikzcd}
            0 & {M} & {N} & {\pi(E)\oplus \pi(E^\prime)} & 0 \\
            0 & {\pi(M^\prime)} & {\pi (N^\prime)} & {\pi(L^\prime)} & 0
            \arrow[from=1-1, to=1-2]
            \arrow[from=1-4, to=1-5]
            \arrow[from=2-1, to=2-2]
            \arrow[from=2-2, to=2-3]
            \arrow[from=2-3, to=2-4]
            \arrow[from=2-4, to=2-5]
            \arrow["w"', from=1-4, to=2-4]
            \arrow["v"', from=1-3, to=2-3]
            \arrow[from=1-3, to=1-4]
            \arrow[from=1-2, to=1-3]
            \arrow["u"', from=1-2, to=2-2]
        \end{tikzcd}
    \end{displaymath}
    where $u,v,w$ are isomorphisms in $\mathcal{A}/\mathcal{B}$. The last row above is the direct image of the short exact sequence in $\mathcal{A}$:
    \begin{displaymath}
        0 \to M^\prime \to N^\prime \to L^\prime \to 0.
    \end{displaymath}
    Applying \Cref{lem:quotient_functor_reflects_isomorphism_up_to_generation}, we see that $M^\prime, N^\prime$ are in $\mathcal{D}$. Hence, $L^\prime$ is in $\mathcal{D}$. As $\pi(E\oplus E^\prime),\pi(L^\prime)$ are isomorphic in $\mathcal{A}/\mathcal{B}$, \Cref{lem:quotient_functor_reflects_isomorphism_up_to_generation} tells us once more that $E\oplus E^\prime$ belongs to $\mathcal{D}$. This completes the proof by induction.
\end{proof}

The following has a special case for the category of coherent sheaves on a Noetherian scheme, cf. \cite[Remark 4.5]{Elagin/Lunts/Schnurer:2020}.

\begin{lemma}\label{lem:serre_localization_sequence_generation}(Elagin-Lunts-Schn\"{u}rer)
    Let $\mathcal{A}$ be an abelian category, $\mathcal{S}$ be a Serre subcategory, and $\pi\colon \mathcal{A} \to \mathcal{A}/\mathcal{B}$ be the associated quotient functor. If there exists $B$ in $\mathcal{B}$ and $G$ in $ \mathcal{A}/\mathcal{B}$ such that $\operatorname{thick}(B)=\mathcal{B}$ and $\operatorname{thick}(G)=\mathcal{A}/\mathcal{B}$, then $\operatorname{thick}(B \oplus G) = \mathcal{A}$.
\end{lemma}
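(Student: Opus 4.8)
The plan is to reduce everything to the thick-subcategory correspondence established in Lemma~\ref{lem:thick_subcategory_correspondence_serre_quotients}. First, since $G$ is literally an object of the quotient $\mathcal{A}/\mathcal{B}$, I fix a lift: choose $\widetilde{G}$ in $\mathcal{A}$ with $\pi(\widetilde{G}) \cong G$, so that $B \oplus \widetilde{G}$ makes sense in $\mathcal{A}$ (the statement is to be read with such a lift understood). Note that $\pi(B) = 0$ because $B \in \mathcal{B} = \ker \pi$, so $\pi(B \oplus \widetilde{G}) \cong \pi(\widetilde{G}) \cong G$. Set $\mathcal{D} := \operatorname{thick}_{\mathcal{A}}(B \oplus \widetilde{G})$; the goal is $\mathcal{D} = \mathcal{A}$.

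The first step is to observe that $\mathcal{B} \subseteq \mathcal{D}$. Indeed, $B$ is a direct summand of $B \oplus \widetilde{G}$, hence lies in the thick subcategory $\mathcal{D}$; since $\mathcal{D}$ is thick it contains $\operatorname{thick}(B)$, which by hypothesis equals $\mathcal{B}$.

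The second step is to invoke Lemma~\ref{lem:thick_subcategory_correspondence_serre_quotients}: because $\mathcal{D}$ is a thick subcategory of $\mathcal{A}$ containing $\mathcal{B}$, it has the form $\mathcal{D} = \pi^{-1}(\mathcal{D}')$, where $\mathcal{D}'$ is the thick subcategory of $\mathcal{A}/\mathcal{B}$ generated by the essential image $\pi(\mathcal{D})$. Since $\widetilde{G} \in \mathcal{D}$, the object $G \cong \pi(\widetilde{G})$ lies in $\pi(\mathcal{D}) \subseteq \mathcal{D}'$; as $\mathcal{D}'$ is thick and contains $G$, it contains $\operatorname{thick}(G) = \mathcal{A}/\mathcal{B}$, so $\mathcal{D}' = \mathcal{A}/\mathcal{B}$. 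Therefore $\mathcal{D} = \pi^{-1}(\mathcal{A}/\mathcal{B}) = \mathcal{A}$, which is the claim.

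The only subtle point is bookkeeping around the lift and the correspondence — one must make sure that generating $\mathcal{A}/\mathcal{B}$ by $G$ is the same as generating it by $\pi$ of our chosen preimage, which is exactly the observation $\pi(B \oplus \widetilde{G}) \cong G$ recorded above; the rest is a direct application of the preceding lemmas. If one prefers to avoid Lemma~\ref{lem:thick_subcategory_correspondence_serre_quotients}, one can instead argue directly: for $A$ in $\mathcal{A}$, Lemma~\ref{lem:thick_abelian_filtration} puts $\pi(A)$ in $\operatorname{th}^n(G)$ for some $n$, and then the same induction-plus-lifting argument used in the proof of Lemma~\ref{lem:thick_subcategory_correspondence_serre_quotients} (repeatedly applying Lemma~\ref{lem:quotient_functor_reflects_isomorphism_up_to_generation}, with $\mathcal{B} = \operatorname{thick}(B) \subseteq \mathcal{D}$ absorbing all kernels and cokernels in $\mathcal{B}$) shows $A \in \operatorname{thick}(B \oplus \widetilde{G})$; but routing through the correspondence keeps the proof short.
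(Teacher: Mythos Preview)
Your proof is correct and follows exactly the paper's approach: the paper's entire proof is the single sentence ``By Lemma~\ref{lem:thick_subcategory_correspondence_serre_quotients}, the smallest thick subcategory generated by $G\oplus B$ coincides with $\mathcal{A}$.'' You have simply unpacked this invocation (and, commendably, made explicit the need to choose a lift $\widetilde{G}$ of $G$, which the paper's statement glosses over).
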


\begin{proof}
    By \Cref{lem:thick_subcategory_correspondence_serre_quotients}, the smallest thick subcategory generated by $G\oplus B$ coincides with $\mathcal{A}$.
\end{proof}

\begin{lemma}\label{lem:generation_coherent_subcategory_closed_subscheme_support}
    Let $X$ be a Noetherian scheme. If $i\colon Z \to X$ is a closed immersion and there exists $G$ in $\operatorname{coh}Z$ such that $\operatorname{thick}(G)=\operatorname{coh}Z$, then $\operatorname{thick}(i_\ast G)=\operatorname{coh}_Z X$.
\end{lemma}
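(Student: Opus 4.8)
The plan is to first push the hypothesis forward along $i_\ast$, and then recover all of $\operatorname{coh}_Z X$ from the essential image of $i_\ast$ by a dévissage. Write $\mathcal{I}_Z \subseteq \mathcal{O}_X$ for the ideal sheaf of the reduced closed subscheme $Z$, and let $\mathcal{E}$ be the full subcategory of $\operatorname{coh} X$ consisting of coherent $\mathcal{O}_X$-modules annihilated by $\mathcal{I}_Z$; since $i$ is a closed immersion, $i_\ast$ is exact and additive, and (as $Z$ is reduced, or indeed for any scheme structure) it identifies $\operatorname{coh} Z$ with $\mathcal{E}$.

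\textbf{Step 1.} I would show by induction on $n$ that $i_\ast(\operatorname{th}^n(G)) \subseteq \operatorname{th}^n(i_\ast G)$: the case $n = 1$ is immediate because an additive functor preserves finite coproducts and direct summands, and the inductive step follows because $i_\ast$ takes a short exact sequence in $\operatorname{coh} Z$ with two terms in $\operatorname{th}^{n-1}(G)$ to one in $\operatorname{coh} X$ with two terms in $\operatorname{th}^{n-1}(i_\ast G)$, and again preserves summands. Taking unions and invoking Lemma~\ref{lem:thick_abelian_filtration} together with $\operatorname{thick}(G) = \operatorname{coh} Z$ gives $\mathcal{E} = i_\ast(\operatorname{coh} Z) \subseteq \operatorname{thick}(i_\ast G)$. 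Conversely $\operatorname{coh}_Z X$ is a Serre, hence thick, subcategory of $\operatorname{coh} X$ containing $i_\ast G$, so $\operatorname{thick}(i_\ast G) \subseteq \operatorname{coh}_Z X$ (and the thick closure of $i_\ast G$ computed inside $\operatorname{coh}_Z X$ agrees with the one computed inside $\operatorname{coh} X$, since $\operatorname{coh}_Z X$ is Serre). Hence it remains only to prove $\operatorname{coh}_Z X \subseteq \operatorname{thick}(\mathcal{E})$.

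\textbf{Step 2.} Let $\mathcal{F} \in \operatorname{coh}_Z X$, so $\operatorname{Supp}(\mathcal{F}) \subseteq |Z| = V(\mathcal{I}_Z)$. Covering $X$ by finitely many affine opens and using that, over a Noetherian ring, an ideal contained in the radical of the annihilator of a finitely generated module has a power annihilating that module, one obtains a single integer $n \geq 1$ with $\mathcal{I}_Z^n \mathcal{F} = 0$. I would then induct on $n$. For $n = 1$ we have $\mathcal{F} \in \mathcal{E}$. For $n \geq 2$, the short exact sequence in $\operatorname{coh} X$
\begin{displaymath}
    0 \to \mathcal{I}_Z\mathcal{F} \to \mathcal{F} \to \mathcal{F}/\mathcal{I}_Z\mathcal{F} \to 0
\end{displaymath}
has $\mathcal{F}/\mathcal{I}_Z\mathcal{F} \in \mathcal{E}$ and $\mathcal{I}_Z\mathcal{F}$ coherent with $\mathcal{I}_Z^{n-1}(\mathcal{I}_Z\mathcal{F}) = 0$, so both outer terms lie in $\operatorname{thick}(\mathcal{E})$ by the inductive hypothesis, whence $\mathcal{F} \in \operatorname{thick}(\mathcal{E})$ by the two-out-of-three property for thick subcategories of an abelian category. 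Combining the two steps yields $\operatorname{coh}_Z X = \operatorname{thick}(\mathcal{E}) \subseteq \operatorname{thick}(i_\ast G) \subseteq \operatorname{coh}_Z X$, and so equality holds throughout.

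There is no serious obstacle here — the argument is dévissage plus functoriality of the filtration $\operatorname{th}^n$ — and the only points demanding a little care are the extraction of a \emph{uniform} exponent $n$ over all of $X$ (which is exactly where Noetherianness enters, via the finite affine cover) and the identification of the essential image of $i_\ast$ with $\mathcal{E}$ (reducedness of $Z$ makes this cleanest, though one could equally work with the ideal sheaf of any chosen scheme structure on $Z$). If one prefers to avoid the commutative-algebra step producing $n$, one may instead filter $\mathcal{F}$ directly by the submodules $\mathcal{I}_Z^k\mathcal{F}$ and run a Noetherian induction, at the cost of a slightly longer write-up.
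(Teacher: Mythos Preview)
Your proof is correct and follows essentially the same d\'evissage as the paper: one inclusion is immediate because $\operatorname{coh}_Z X$ is thick, and for the other one filters an arbitrary $\mathcal{F}\in\operatorname{coh}_Z X$ by the submodules $\mathcal{I}_Z^k\mathcal{F}$ so that the successive quotients are annihilated by $\mathcal{I}_Z$ and hence lie in the essential image of $i_\ast$. Your Step~1 is in fact slightly more careful than the paper, which simply asserts that such quotients belong to $\operatorname{thick}(i_\ast G)$ without spelling out the inclusion $i_\ast(\operatorname{coh} Z)\subseteq\operatorname{thick}(i_\ast G)$; your inductive argument on $\operatorname{th}^n$ makes this explicit.
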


\begin{proof}
    Note that $\operatorname{coh}_Z X$ is a thick subcategory of $\operatorname{coh}X$, and so $\operatorname{thick}(i_\ast G)$ is contained in $\operatorname{coh}_Z X$. We check the converse. Let $E$ be an object of $\operatorname{coh}_Z X$. Let $\mathcal{I}$ be the ideal sheaf corresponding to $Z$. There is an $n\geq 0$ such that $\mathcal{I}^n E=(0)$. This gives us a descending chain in $\operatorname{coh} X$:
    \begin{displaymath}
        (0)=\mathcal{I}^n E \subseteq \mathcal{I}^{n-1}E \subseteq \cdots \subseteq \mathcal{I}E \subseteq E.
    \end{displaymath}
    All quotients $\mathcal{I}^j E/\mathcal{I}^{j+1}E$ are annihilated by $\mathcal{I}$, and so these belong to $\operatorname{thick}(i_\ast G)$. Thus, $E$ belongs to $\operatorname{thick}(i_\ast G)$.
\end{proof}

\begin{lemma}\label{lem:reduce_to_integral}
    Let $X$ be a Noetherian scheme. Let $\pi_i \colon Z_i \to X$ denote the closed immersions from the irreducible components of $X$. If for each $1\leq i \leq n$ there exists $G_i$ in $\operatorname{coh}Z_i$ such that $\operatorname{thick}(G_i)=\operatorname{coh}Z_i$, then $\operatorname{thick}(\oplus^n_{i=1} \pi_{i,\ast} G_i)= \operatorname{coh}X$.
\end{lemma}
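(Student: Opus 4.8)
The plan is to realize an arbitrary coherent sheaf on $X$ as an iterated extension (up to direct summands) of sheaves scheme-theoretically supported on the reduced components $Z_i$, and then feed in the generation hypothesis on each $\operatorname{coh}Z_i$ together with the exactness of the pushforwards $\pi_{i,\ast}$. Concretely, let $\mathcal{I}_i\subseteq\mathcal{O}_X$ be the quasi-coherent ideal sheaf cutting out the reduced closed subscheme $Z_i$. Since $Z_1\cup\cdots\cup Z_n=X_{\mathrm{red}}$ as closed subschemes, $\mathcal{I}_1\cap\cdots\cap\mathcal{I}_n=\mathcal{N}$ is the nilradical of $\mathcal{O}_X$, and because $X$ is Noetherian there is an $m\geq 1$ with $\mathcal{N}^m=0$ (cover by finitely many affine opens and take the maximum of the nilpotency indices). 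As the product of ideals lies in their intersection, $(\mathcal{I}_1\mathcal{I}_2\cdots\mathcal{I}_n)^m\subseteq\mathcal{N}^m=0$.

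Next I would fix a coherent sheaf $E$ on $X$ and form the finite descending filtration obtained by multiplying $E$ successively by $\mathcal{I}_1,\mathcal{I}_2,\dots,\mathcal{I}_n$ and iterating this cycle $m$ times; its bottom term is $(\mathcal{I}_1\cdots\mathcal{I}_n)^m E=0$. Every one-step subquotient has the shape $F/\mathcal{I}_j F$ and is thus annihilated by $\mathcal{I}_j$, so it is the pushforward $\pi_{j,\ast}F'$ of a coherent sheaf $F'$ on $Z_j$ under the standard identification of $\mathcal{O}_X$-modules killed by $\mathcal{I}_j$ with $\mathcal{O}_{Z_j}$-modules. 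Since $F'\in\operatorname{coh}Z_j=\operatorname{thick}(G_j)$ and $\pi_{j,\ast}$ is exact (being a closed immersion), a straightforward induction using Lemma~\ref{lem:thick_abelian_filtration} shows $\pi_{j,\ast}F'\in\operatorname{thick}(\pi_{j,\ast}G_j)\subseteq\operatorname{thick}\!\big(\bigoplus_{i=1}^n\pi_{i,\ast}G_i\big)$. Running a descending induction up the (finite) filtration and invoking the two-out-of-three property of the thick subcategory $\operatorname{thick}(\bigoplus_i\pi_{i,\ast}G_i)$ of $\operatorname{coh}X$ for short exact sequences then gives $E\in\operatorname{thick}(\bigoplus_i\pi_{i,\ast}G_i)$. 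The reverse containment is automatic, so $\bigoplus_{i=1}^n\pi_{i,\ast}G_i$ is a classical generator of $\operatorname{coh}X$.

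The only point needing any care is the reduction to sheaves annihilated by a single $\mathcal{I}_j$: since $X$ need not be reduced, $\bigcup_i Z_i$ is only $X_{\mathrm{red}}$ scheme-theoretically, so one cannot literally stratify $\operatorname{coh}X$ by the $Z_i$; the nilpotence of the product $\mathcal{I}_1\cdots\mathcal{I}_n$ is exactly what forces the cyclic filtration to terminate. This is a mild elaboration of the filtration already used in Lemma~\ref{lem:generation_coherent_subcategory_closed_subscheme_support}, and everything else (exactness of $\pi_{j,\ast}$, stability of thick subcategories under two-out-of-three, compatibility of the $\operatorname{th}^n$ filtration with exact functors) is routine. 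As an alternative one could first reduce to the case $X=X_{\mathrm{red}}$ via Lemma~\ref{lem:generation_coherent_subcategory_closed_subscheme_support} applied to $X_{\mathrm{red}}\hookrightarrow X$, and then induct on the number of components using the Serre localization sequence $\operatorname{coh}_W X\to\operatorname{coh}X\to\operatorname{coh}U$ with $W=Z_2\cup\cdots\cup Z_n$ and $U=X\setminus W$ an open subscheme of $Z_1$, combining Lemma~\ref{lem:generation_coherent_subcategory_closed_subscheme_support} with Lemma~\ref{lem:serre_localization_sequence_generation}; the filtration route above is the shorter of the two.
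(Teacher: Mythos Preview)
Your argument is correct. The paper's proof is terser and proceeds slightly differently: it invokes the general d\'evissage filtration \cite[\href{https://stacks.math.columbia.edu/tag/01YD}{Tag 01YD}]{StacksProject}, which filters any coherent sheaf with successive quotients pushed forward from arbitrary integral closed subschemes $Z\subseteq X$; since each such $Z$ lies in some component $Z_i$, one then applies Lemma~\ref{lem:generation_coherent_subcategory_closed_subscheme_support} to conclude that these quotients lie in $\operatorname{thick}(\pi_{i,\ast}G_i)$. Your cyclic filtration by the ideals $\mathcal{I}_1,\ldots,\mathcal{I}_n$ is a genuinely more direct construction: the subquotients are scheme-theoretically supported on the components $Z_i$ themselves, so you bypass the black-box d\'evissage and need only the exactness of $\pi_{i,\ast}$ (which is the content of Lemma~\ref{lem:generation_coherent_subcategory_closed_subscheme_support} anyway). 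In effect you have inlined a multi-ideal version of the filtration already appearing in that lemma, which makes your proof more self-contained; the paper's route is shorter to write because it outsources the filtration. Your alternative via Lemma~\ref{lem:serre_localization_sequence_generation} would also work and is closer in spirit to how the paper handles the main theorem.
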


\begin{proof}
    Let $E$ be in $\operatorname{coh}X$. Use induction on number of irreducible components for $\operatorname{Supp}(E)$ to reduce to case where $E$ is supported on a closed integral subscheme, cf. \cite[\href{https://stacks.math.columbia.edu/tag/01YD}{Tag 01YD}]{StacksProject}. The desired claims follows from \Cref{lem:generation_coherent_subcategory_closed_subscheme_support}.
\end{proof}

\begin{proof}[Proof of \Cref{thm:singular_locus_and_generation_closed_subschemes}]
    The last claim follows from the first claim by \Cref{lem:reduce_to_integral}. So we only need to prove the first claim. This will be done by Noetherian induction on $X$. It is obvious for the empty scheme, so it may be assumed that $X$ is nonempty. We may further impose that $X$ is integral as it suffices to verify the first claim on each irreducible component of $X$ by \Cref{lem:reduce_to_integral}. Observe that we only need to prove the case where $Z=X$ because Noetherian induction tells us it is true for all properly contained closed integral subschemes of $X$.

    By \Cref{lem:abelian_thick_to_bounded_derived_thick}, \eqref{thm:main5} implies \eqref{thm:main3}. Moreover, as $X$ is integral, we see that \eqref{thm:main2} implies \eqref{thm:main1}. It is straightforward that \eqref{thm:main3} and \eqref{thm:main4} are equivalent by abstract nonsense on Verdier localizations. On one hand, if $D^b_{\operatorname{coh}}(X)$ admits a classical generator, then so must $D_{\operatorname{sg}}(X)$ as the natural functor $D^b_{\operatorname{coh}}(X) \to D_{\operatorname{sg}}(X)$ is a Verdier localization with kernel being $\operatorname{perf}X$. On the other hand, if $D_{\operatorname{sg}}(X)$ admits a classical generator, then so must $D^b_{\operatorname{coh}}(X)$; to see, apply (2) of \Cref{rmk:scheme_verdier_localization_ELS} and \cite[Corollary 3.1.2]{BVdB:2003}.

    Next we show \eqref{thm:main3} implies \eqref{thm:main2}. By \eqref{thm:main3}, $D_{\operatorname{sg}}(U)$ admits a classical generator for each open affine subscheme $U$ of $X$. Then \Cref{rmk:classical_generator_singularity_category_j1} implies $U$ must be $J\textrm{-}1$. So the claim follows as $X$ admits an affine open cover by $J\textrm{-}1$ schemes.

    Lastly we show \eqref{thm:main1} implies \eqref{thm:main5}. There exists a nonempty open affine $j \colon U \to X$ contained in the regular locus of $X$, and hence, a sequence of abelian categories (see \cite[Remark 4.5]{Elagin/Lunts/Schnurer:2020})
    \begin{displaymath}
        \operatorname{coh}_Z X \to \operatorname{coh}X \to \operatorname{coh}U.
    \end{displaymath}
    As $Z$ is properly contained in $X$, the induction hypothesis tells us there exists $G$ in $\operatorname{coh}Z$ such that $\operatorname{thick}(G)=\operatorname{coh}Z$. Hence, we know that $i_\ast G$ is a classical generator for $\operatorname{coh}_Z X$ via \Cref{lem:generation_coherent_subcategory_closed_subscheme_support}. The desired claim follows from \Cref{lem:serre_localization_sequence_generation}. This completes the proof.

\end{proof}

Now we present two direct consequences of \Cref{thm:singular_locus_and_generation_closed_subschemes}. The first one is a global version  of Nagata's regularity criteria; see \cite[Corollary 2.7]{Iyengar/Takahashi:2019}, \cite[32.A]{Matsumura} and \cite[Tag 07P9]{StacksProject}. 

\begin{corollary}\label{NagC}  Let $X$ be a Noetherian scheme. If every closed integral subscheme of $X$ is J-$0$, then $X$ is J-$1$.
\end{corollary}

\begin{proof} This is a direct consequence of \Cref{thm:singular_locus_and_generation_closed_subschemes} and \Cref{rmk:classical_generator_singularity_category_j1}(2).
\end{proof} 

The next consequence contains a global version of \cite[Proposition 2.8]{Iyengar/Takahashi:2019} and recovers part of \cite[Theorem 1, Proposition 1]{Nagata:1959}. 

\begin{corollary}\label{J2 crit} The following conditions are equivalent for a Noetherian scheme $X$. 

\begin{enumerate}
    \item $X$ is J-$2$.

    \item Every integral scheme $Y$, for which there exists a finite morphism $Y\to X$, is J-$0$. 

    \item For every finite morphism of schemes $Y\to X$, the  category $\operatorname{coh}(Y)$ admits a classical generator. 

    \item For every finite morphism of schemes $Y\to X$, the  category $D^b_{\operatorname{coh}}(Y)$ admits a classical generator. 

    \item For every integral scheme $Y$, for which there exists a finite morphism $Y\to X$, the category $D_{sg}(Y)$ admits a classical generator. 

    \item Every closed integral subscheme of $X$ is J-$2$. 
\end{enumerate}
    
\end{corollary}

\begin{proof} The proof of the equivalences of (1) through (5) is similar to the proof of \cite[Proposition 2.8]{Iyengar/Takahashi:2019} by remembering that closed immersions are finite morphism, compositions of finite morphisms are finite, and by using \Cref{thm:singular_locus_and_generation_closed_subschemes} and \Cref{rmk:classical_generator_singularity_category_j1}(2) respectively in place of \cite[Theorem 1.1]{Iyengar/Takahashi:2019} and \cite[Lemma 2.6]{Iyengar/Takahashi:2019}. Now for the equivalence of (1) through (5) with (6), we first see that (1)$\implies$ (6) is by definition. Now assume (6), and we want to prove $X$ is $J\textrm{-}2$. By (2), it is enough to prove that for every finite morphism $Y\to X$, $Y$ is $J\textrm{-}1$. So let $f:Y\to X$ be a finite morphism. By \Cref{NagC}, it is enough to prove that every closed integral subscheme of $Y$ is $J\textrm{-}0$. So let $i:Z\to Y$ be a closed integral subscheme of $Y$.  Then, there exists a closed integral subscheme $j : Z' \to X$ of $X$ and a morphism $g:Z \to Z'$ such that $f\circ i=j\circ g$. Since $f$ is a finite morphism, hence so is $j\circ g=f\circ i$. since $j$ is a closed immersion, so $g: Z\to Z'$ is a finite morphism. By hypothesis of (6), $Z'$ is $J\textrm{-}2$, hence $Z$ is $J\textrm{-}1$, so $J\textrm{-}0$ (as $Z$ is integral) as we wanted to show. 
\end{proof}

The following examples are new instances where \Cref{thm:singular_locus_and_generation_closed_subschemes} can be applied.

\begin{example}
    Any Nagata schemes of Krull dimension one is $J\textrm{-}2$. This is checked locally in light of \cite[\href{https://stacks.math.columbia.edu/tag/07PJ}{Tag 07PJ}]{StacksProject}. Appealing to \cite[Theorem 4.15]{Elagin/Lunts/Schnurer:2020} or \Cref{thm:singular_locus_and_generation_closed_subschemes}, it can be seen that $D^b_{\operatorname{coh}}(X)$ admits a classical generator.
\end{example}

\begin{example}
    Let $X=\operatorname{Spec}(R)$ where $R$ is a Noetherian semi-local integral domain of Krull dimension two with open regular locus. We can verify that $D^b_{\operatorname{coh}}(X)$ admits a classical generator. Since $X$ has finitely many closed points, it suffices to check the claim at each closed point. Indeed, if for each maximal ideal $p$ in $X$ one has $D^b_{\operatorname{coh}}(\mathcal{O}_{X,p})$ admitting a classical generator, then it is the image of an object in $D^b_{\operatorname{coh}}(X)$ under the Verdier localization functor $D^b_{\operatorname{coh}}(X) \to D^b_{\operatorname{coh}}(\mathcal{O}_{X,p})$. For each closed point $p$ in $X$, choose an object $G_p$ of $D^b_{\operatorname{coh}}(X)$ satisfying this condition. By \cite[Theorem 3.6]{Letz:2021}, we see that $G:= \oplus_{p\in \operatorname{mSpec}(R)} G_p$ is a classical generator for $D^b_{\operatorname{coh}}(X)$. 
    
    Consider the case where $R$ is local. If $Z$ is a properly contained closed subscheme of $X$, then $Z$ is the affine spectrum of Noetherian local ring of Krull dimension at most one. By \cite[\href{https://stacks.math.columbia.edu/tag/07PJ}{Tag 07PJ}]{StacksProject}, any such closed subscheme is $J\textrm{-}2$, and hence, $J\textrm{-}1$. Therefore, \Cref{thm:singular_locus_and_generation_closed_subschemes} tells us $D^b_{\operatorname{coh}}(X)$ admits a classical generator.
\end{example}

\begin{example}[Murayama]\label{ex:murayama}
    There is an example of a Noetherian scheme which satisfies condition $(2)$ of \Cref{thm:singular_locus_and_generation_closed_subschemes}, and yet is not $J\textrm{-}2$. That is, there exists a Noetherian scheme whose closed integral subschemes are $J\textrm{-}1$, but $X$ is not $J\textrm{-}2$. We follow the construction made by Nagata in \cite[$\S 4$]{Nagata:1959}. Let $k_0$ be a perfect field of characteristic $p\not=0$ and $v_1,\ldots,v_n, \ldots$ be infinitely many algebraically independent elements over $k_0$. Set $k$ to be the field $k_0(v_1,\ldots,v_n,\ldots)$. Choose analytically independent elements $x_1,x_2$ over $k$. Set $A=k^p \llbracket x_1,x_2\rrbracket [k]$. Then $A$ is a regular local ring. Let $p_1,\ldots,p_n,\ldots$ be infinitely many prime elements in $A$ such that $p_i A \not= p_j A$ if $i\not= j$. For $n\geq 0$, let $q_n = p_1 \cdots p_n$ and $c= \sum_i v_i q_i$. The singular locus of $\operatorname{Spec}(A[c])$ is not contained in any proper closed subset of $\operatorname{Spec}(A[c])$. Let $I=A[\frac{1}{x_1}]$ and $B:=A[c]$. Then $I$ is a Dedekind domain and $B[\frac{1}{x_1}]=I[c]$ is a finite $I$-algebra. However, the singular locus of $I[c]$ is not closed. Since $I$ is a Dedekind domain, it satisfies the property that every closed integral subscheme is $J\operatorname{-}1$.
\end{example}

\bibliographystyle{alpha}
\bibliography{mainbib}

\end{document}